\theoremstyle{plain}
\newtheorem{theorem}{Theorem}[section]
\newtheorem*{theorem*}{Theorem}
\newtheorem{lemma}[theorem]{Lemma}
\theoremstyle{definition}
\newtheorem{definition}[theorem]{Definition}
\newtheorem{remark}[theorem]{Remark}
\newcommand{\enm}[1]{\ensuremath{#1}}          %
\newcommand{\cal}[1]{\mathcal{#1}}
\newcommand{\NN}{\enm{\mathbb{N}}}
\renewcommand{\AA}{\enm{\mathbb{A}}}
\newcommand{\PP}{\enm{\mathbb{P}}}
\newcommand{\Ii}{\enm{\cal{I}}}
\newcommand{\Oo}{\enm{\cal{O}}}
\newcommand{\calA}{\mathcal{A}}
\renewcommand{\phi}{\varphi}
\renewcommand{\theta}{\vartheta}
\renewcommand{\epsilon}{\varepsilon}
\title[A note on cactus rank]{A note on the cactus rank for Segre-Veronese varieties}
\author[E. Ballico]{Edoardo Ballico}
\author[A. Bernardi]{Alessandra Bernardi}
\address[Edoardo Ballico, Alessandra Bernardi]{Dipartimento di Matematica,  Univ. Trento, Italy}
\email[E. Ballico]{edoardo.ballico@unitn.it}
\email[A. Bernardi]{alessandra.bernardi@unitn.it}
\author[F. Gesmundo]{Fulvio Gesmundo}
\address[Fulvio Gesmundo]{QMATH, Dept. of Mathematical Sciences, Univ. of Copenhagen, Denmark}
\email{fulges@math.ku.dk}
\keywords{cactus rank, Segre-Veronese variety}
\subjclass[2010]{15A69, 14M12}
\newcommand{\bbP}{\mathbb{P}}
\newcommand{\bbK}{\mathbb{K}}
\newcommand{\calO}{\mathcal{O}}
\newcommand{\vvirg}{, \dots ,}
\newcommand{\bfd}{\mathbf{d}}
\newcommand{\charact}{\mathrm{char}}
\begin{document}
\begin{abstract}We give an upper bound for the cactus rank of any multi-homogeneous polynomial. \end{abstract}

\maketitle
\section{Introduction}

The notion of cactus rank for homogeneous polynomials was first introduced in the literature by A. Iarrobino and V. Kanev in their famous book \cite[Definition 5.1, Definition 5.66]{ik} with the name of ``scheme length''. The expression ``cactus rank'' was first used  in \cite{br,rs} after the pioneering work of W. Buczy\'nska and J. Buczy\'nski \cite{bb} where the authors introduced the notion of cactus varieties.

Let $\bbK$ be an algebraically closed field and let $S = \bbK_{DP} [x_0 \vvirg x_n]$ be the divided power ring of polynomials in $n+1$ variables over $\bbK$ (if $\charact{(\bbK)} = 0$ one can work with the standard polynomial ring -- we refer to \cite[Appendix A]{ik}, \cite[Section 2.1]{bjmr}, \cite[Appendix 2]{Eis:CommutativeAlgebra} for the theory on divided power rings). Denote by $S_d$ the component of degree $d$ in $S$ and let $\nu_d$ denote the $d$-th Veronese embedding $\nu_d : \mathbb{P}^n \rightarrow \bbP ( S_d) = \mathbb{P}^{\binom{n+d}{d}-1}$ given by $[L] \mapsto [L^{[d]}]$ (for $f \in S$ homogeneous, $[f]$ denotes its class in the corresponding projective space), namely the embedding of $\bbP ^n$ defined by the line bundle $\calO(d)$; its image $\nu_d ( \bbP^n) :=  \{ [L^{[d]}] :  L \in S_1 \} \subseteq \bbP S_d $ is the so-called Veronese variety. Given a homogeneous polynomial $f \in S_d$ (here we assume $d > \charact(\bbK)$ if $\bbK$ has positive characteristic), the \emph{cactus rank} of $f$  is the minimum length of a zero-dimensional scheme $Z$ contained in $\nu_d (\bbP^n)$ such that such that $[f]\in \langle Z \rangle$.

Equivalently, the cactus rank of $f$ is the minimum length of a zero-dimensional scheme $\Gamma \subseteq \bbP^n$ that is \emph{apolar} to $f$, in the sense of \cite[Def. 1.11]{ik}. In this setting $Z = \nu_d(\Gamma)$. The scheme $\Gamma$ is called a minimal apolar scheme to $f$ and it has been extensively studied in the recent years (see \cite{ik,bb,br,rs,bjmr,bbm,bj}). In particular in \cite{br,bjmr} the authors explore the behaviors for the cactus rank in the case of cubics polynomials, while in \cite{rs} the monomial case is studied. 

The notion of cactus rank can be defined with respect to an arbitrary algebraic variety (see e.g. \cite{g2}). In this work, we focus on the setting of partially symmetric tensors, which has been studied also in \cite{BERNARDI20081542,grv,t,hoos}, where the authors used a natural extension of the notion of apolarity, and in \cite{g1,g2}, where the author widened  this notion to an even more general setting, and provided equations for cactus varieties via vector bundle techniques (see e.g. \cite{lo,t}).

Given positive integers $s, n_1 \vvirg n_s$, let $R = \bbK _{DP} [x_{i, j}: j = 1 \vvirg s, i= 0 \vvirg n_j]$ be a polynomial ring in $\sum (n_j + 1)$ variables. Given a multidegree $\bfd = (d_1 \vvirg d_s)$, write $R_\bfd$ for the multihomogeneous component of multidegree $\bfd$, i.e. $f \in R_\bfd$ if, for every $j$, $f$ is homogeneous of degree $d = d_1 + \cdots + d_s$ in $R$ and in addition it is homogeneous of degree $d_j$ in the $j$-th set of variables $\{ x_{i,j} : i = 0 \vvirg n_j\}$. 

Let $Y=\mathbb{P}^{n_1}\times \cdots \times \mathbb{P}^{n_s}$, where we may regard $\bbP^{n_j} = \bbP (R_{e_j})$ with $e_j = (0 \vvirg 0,1,0 \vvirg 0)$ ($1$ being at the $j$-th entry); the Segre-Veronese embedding of multidegree $\bfd = (d_1 \vvirg d_s)$ is the map $\nu _{\bfd}: Y \rightarrow \bbP R_\bfd  = \PP^r$ where $r+1 = \prod _{i=1}^{s} \binom{n_i+d_i}{n_i}$, namely the embedding of $Y$ defined by the line bundle $\mathcal{O}_Y(\bfd)$ (notice $R_\bfd = H^0(\mathcal{O}_Y(\bfd))$). The notion of cactus rank extends to this setting as follows.

\begin{definition}\label{cactus:def}
Let $X:= \nu _{\bfd}(Y)\subset \PP^r$ be the Segre-Veronese variety, $r+1 = \prod _{i=1}^{s} \binom{n_i+d_i}{n_i}$ (here $\sum d_i < \charact(\bbK)$ if we work in positive characteristic). For any $f\in \PP^r$ the cactus rank $cr_\bfd(f)$ is the minimal length of a zero-dimensional scheme $Z\subset X$ such that $f\in \langle Z\rangle$.
\end{definition}

In this work, we give an upper bound on the maximum possible value of $cr_{\bfd}$ in terms of $d_1 \vvirg d_s$ and $n_1 \vvirg n_s$, or equivalently on the value of $cr_{\bfd}$ for every $f \in R_\bfd$ (Theorem \ref{i1}).

Possible approaches to this problem include the use of the notion of Hankel operators as it is applied in \cite{bbcm} to partially symmetric tensors, or of extended notions of apolarity as in \cite{hoos,grv,g1}. Our approach is based on the one followed by M. V. Catalisano, A. V. Geramita and A. Gimigliano in \cite{cgg}, where the authors show that the Hilbert function of a certain number of double fat points on the Segre-Veronese variety coincides with the Hilbert function of a scheme contained in $\mathbb{P}^n$ consisting of fat points and linear spaces, for $n = n_1 + \cdots + n_s$.  This will allow us to give an explicit upper bound in the case of Segre-Veronese varieties.

Following \cite{cgg} we construct an embedding 
\[
u: R_\bfd = H^0(\Oo _Y(d_1,\dots ,d_s))\to H^0(\PP^n,\Oo _{\PP^n}(d))
\]
with $n:= n_1+\cdots +n_s$ and $d:= d_1+\cdots +d_s$, which will allow us to extend the results on the cactus rank of homogeneous polynomials to the case of multi-homogeneous polynomials.

Our main result is the following:

\begin{theorem}\label{i1}
For any multi-homogeneous polynomial $f\in \PP^r\simeq \mathbb{P} R_{\bfd}$ with $r+1 = \prod _{i=1}^{s} \binom{n_i+d_i}{n_i}$, $n:= n_1+\cdots +n_s$ and $d:= d_1+\cdots +d_s$, we have that 
\begin{itemize}
\item if $d$ is even, say $d=2k+2$ for some $k\in \NN$, then $cr_{\bfd}(f) \le \binom{n+k}{k}+\binom{n+k+1}{k+1}$,
\item if $d =2k+1$ is odd we have $cr_{\bfd}(f) \le 2\binom{n+k}{k}$.
\end{itemize}
\end{theorem}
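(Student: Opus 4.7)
The plan is to reduce the theorem to the classical Iarrobino--Kanev upper bound on the cactus rank of a homogeneous form of degree $d$ in $n+1$ variables, by passing through the embedding $u\colon R_\bfd \hookrightarrow H^0(\PP^n, \Oo_{\PP^n}(d))$ described before the statement. The map $u$ coincides with the pullback $\tau^*$ along the rational substitution $\tau\colon \PP^n \dashrightarrow Y$ given by $[y_0\colon\cdots\colon y_n]\mapsto \bigl([y_0\colon y_1\colon\cdots\colon y_{n_1}],\,[y_0\colon y_{n_1+1}\colon\cdots\colon y_{n_1+n_2}],\,\ldots\bigr)$; it is a birational morphism, restricting to an isomorphism on the open locus of $\PP^n$ lying above the open cell of $Y$ where the zeroth coordinate of each factor is nonzero.

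Given $f \in R_\bfd$, set $F := u(f) \in H^0(\PP^n, \Oo_{\PP^n}(d))$. Applying the classical Iarrobino--Kanev bound (cf.\ \cite{ik}) to $F$ yields a zero-dimensional scheme $\Gamma \subset \PP^n$ apolar to $F$, whose length is at most $\dim S_{\lfloor(d-1)/2\rfloor}+\dim S_{\lceil(d-1)/2\rceil}$: namely $\binom{n+k}{k}+\binom{n+k+1}{k+1}$ when $d=2k+2$, and $2\binom{n+k}{k}$ when $d=2k+1$. The Iarrobino--Kanev construction allows $\Gamma$ to be chosen in sufficiently general position; in particular, one can place it within the isomorphism locus of $\tau$, and set $Z := \tau(\Gamma)\subset Y$, so that $\ell(Z)=\ell(\Gamma)$.

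It remains to verify that $Z$ is apolar to $f$, i.e., that $f\in\langle\nu_\bfd(Z)\rangle\subset\PP R_\bfd$. For every $h$ in the multi-graded ideal $I_Z\subset R$, the pullback $u(h)=\tau^*h$ lies in $I_\Gamma$, so that the apolar contraction $u(h)\cdot F$ vanishes; the crux is to deduce that $h\cdot f = 0$ in the multi-graded apolar sense. By the chain rule for the substitution $\tau$, one obtains the identity $u(h)\cdot u(f) = u\bigl(T(h)\cdot f\bigr)$, where $T(h)$ is the differential operator on $R$ produced from $h$ by replacing every $\partial_{x_{0,j}}$ by $\sum_{j'}\partial_{x_{0,j'}}$. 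The expression $T(h)\cdot f$ decomposes as a sum of contributions lying in distinct multi-graded components of $R$, one of which equals $h\cdot f$; although $u$ fails to be injective across this direct sum, a monomial-level analysis of the multinomial coefficients appearing in the expansion should force each summand to vanish individually when $u(T(h)\cdot f)=0$, and in particular force $h\cdot f=0$. This apolarity transfer is the main technical obstacle; once it is settled, combining it with $\ell(Z)=\ell(\Gamma)$ and the Iarrobino--Kanev bound on $\ell(\Gamma)$ yields the theorem.
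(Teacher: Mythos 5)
Your overall strategy is the same as the paper's: push $f$ into $S_d$ via the substitution $u$, invoke a known upper bound for the cactus rank of a degree-$d$ form on $\PP^n$ to get a small zero-dimensional scheme there, and pull that scheme back through the locus where $u$ induces an isomorphism $\AA^n \cong U \subseteq Y$. Two remarks on the first half. The bound you quote is not really Iarrobino--Kanev's; the paper takes it from Theorem 3 of Bernardi--Ranestad \cite{br}, and what matters is not just the numbers but the fact that the apolar scheme $W$ produced there is \emph{punctual}, supported at a single point that can be chosen generally --- this is what lets you legitimately place it inside the chart $\{z_0\neq 0\}$. Your ``sufficiently general position'' assertion is the right idea but needs that specific feature of the construction, not a vague genericity claim.

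The genuine gap is the step you yourself flag: transferring apolarity from $W\subset\PP^n$ back to $Z\subset Y$. Your proposed route --- pushing the differential operator $h$ through the substitution via a ``chain rule'' operator $T(h)$ that replaces $\partial_{x_{0,j}}$ by $\sum_{j'}\partial_{x_{0,j'}}$ --- does not close up. Since $f$ has multidegree exactly $\bfd$, only the multidegree-$\bfd$ component of $T(h)$ acts nontrivially on $f$, and that component is not a scalar multiple of $h$: a monomial of $h$ containing $\prod_j \partial_{x_{0,j}}^{a_j}$ reappears scaled by the multinomial coefficient $\binom{a_1+\cdots+a_s}{a_1,\dots,a_s}$, which varies from monomial to monomial. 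So the vanishing of $u(h)\cdot u(f)$ gives $\tilde h\cdot f=0$ for a rescaled $\tilde h$, not $h\cdot f=0$, and no ``monomial-level analysis'' will force the individual summands to vanish. The paper avoids this entirely by never dualizing through the substitution: Lemmas \ref{lemma1} and \ref{lemma2} (resting on \cite[Theorem 1.1]{cgg}) identify $R_\bfd$ isomorphically with $\mathcal{A}=H^0(\PP^n,\Ii_{L_1\cup\cdots\cup L_s}(d))$, and for schemes inside the distinguished affine chart the ideals correspond under $u$ because vanishing conditions are checked after dehomogenizing $x_{0,1},\dots,x_{0,s}$ and $z_0$ to $1$, where $h$ and $u(h)$ become the same affine polynomial. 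The containment $f\in\langle\nu_\bfd(Z)\rangle$ is then read off from $u(f)\in\langle\nu_d(W)\rangle$ by restricting linear functionals to $\mathcal{A}$. To complete your argument you would need to replace the $T(h)$ computation by this ideal-theoretic correspondence (or prove the auxiliary dimension count of \cite{cgg} yourself); as written, the key step is missing.
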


\begin{remark}
We compare our result with some known results on this topic. For $s = 1$, we obtain an upper bound for the cactus rank in the symmetric case: for $d = 3$, Thm. \ref{i1} recovers the result of \cite{br} on cubic forms; for higher values of $d$, Thm. \ref{i1} does not give an improvement with respect to the bound provided by \cite{rs}.

In the tensor case (when $d_i = 1$ for $i=1 \vvirg s$), we obtain more interesting results: if $s$ is odd and $m = n_i$ for every $i$, Thm. \ref{i1} provides the upper bound $2 \binom{m + \lfloor s/2 \rfloor}{\lfloor s/2 \rfloor}$ which grows as a polynomial of degree $\lfloor s/2 \rfloor$ in $ms$; a similar growth is attained in the case where $s$ is even. In particular, in the case $s = 3$, we obtain that the maximum value of $cr_{1,1,1}$ grows with leading term $6m$ which was already known to Buczy\'nski (personal communication, 2016); we observe that a bound with the same leading term can be obtained as a straightforward application of the result of \cite{br}. For every fixed value of $s$, Thm. \ref{i1} shows that the maximum possible cactus rank grows as a polynomial of degree $s/2$ in $m$ (a lower bound with this growth is immediate via flattening techniques), whereas the maximum possible value for the tensor rank (see e.g. \cite{Lan:TensorBook}) grows as a polynomial of degree $s-1$ in $m$.

We point out that during the referee process of this paper we have been informed that M. Ga\l{}\k{a}zka is currently working on the same topic and he independently obtained bounds similar to the ones of Thm. \ref{i1} using different techniques.
\end{remark}

\subsection*{Acknowledgements} This paper was conceived during the International workshop on ``Quantum Physics and Geometry'' (Levico Terme, Trento, Italy, July 4--6, 2017), funded by CIRM (Centro Internazionale per la Ricerca Matematica), INDAM (Istituto Nazionale di Alta Matematica), Universit\`a degli Studi di Trento, INO-CNR BEC Center, SiQuro project of Provincia Autonoma di Trento, EU-FET Proactive grant AQuS, INFN (Istituto Nazionale di Fisica Nucleare), TIFPA (Trento Institute for Fundamental Physics and Applications). F.G. acknowledges financial support from the European Research Council (ERC Grant Agreement no. 337603), the Danish Council for Independent Research (Sapere Aude), and VILLUM FONDEN via the QMATH Centre of Excellence (Grant no. 10059).

We thank J.M. Landsberg for suggesting to investigate upper bounds on the cactus rank and J. Buczy\'nski for pointing out the uncited references in an earlier version of this paper. We thank the anonymous referee for useful remarks.

\section{Map construction}

Consider the set of variables $\{ z_0 \} \cup \{ z_{i,j}\}_{1\le j\le s, 1\le i \le n_j}$ and regard them, as homogeneous coordinates on $\PP^n$, with $n = n_1 + \cdots n_s$; let $S:=\mathbb{K}_{\mathrm{DP}}[z_0,z_{i,j}]_{1\le j\le s, 1\le i \le n_j}$ be the corresponding polynomial ring and let $S_d$ be its component of degree $d$. For any fixed $j=1,\dots ,s$, let $M_j\subset \PP^n$ be the linear subspace with equations $z_0=0$ and $z_{i,h} =0$ {for all $i$ and for $h\in \{1,\dots ,s\}\setminus \{j\}$}. Note that the only non-zero variables on $M_j$ are $z_{1,j},\dots ,z_{n_j,j}$ and so $\dim M_j=n_j-1$. For every $j$, let $L_j :=(d-d_j)M_j\subset \PP^n$ be the scheme defined by the $(d-d_j)$-th power  $(I(M_j))^{(d-d_j)}$ of the ideal of $M_j$. The equations of $L_j$ are all monomials of degree $d-d_j$ in the variables $\{z_0,z_{i,h}\}_{h\in \{1,\dots ,s\}\setminus \{j\}}$.

Let
$$\mathcal{A}:= H^0(\PP^n,\Ii _{L_1\cup \cdots \cup L_s}(d)).$$ 

Following \cite{cgg}, we construct an injective linear map $u: R_{\bfd}\to H^0(\PP^n,\Oo _{\PP^n}(d))=S_d$ whose image is $\mathcal{A}$. The map $u$ is the evaluation map defined by 
\begin{equation}\label{eqa1}
\begin{array}{lll}
 u(x_{0,j}) = z_0 &\quad \text{ for } j = 1 \vvirg s \\
 u(x_{i,j}) = z_{i,j}& \quad  \text{ for } j = 1 \vvirg s, i = 1 \vvirg n_s.   
\end{array}
\end{equation}

In particular, $u:  R_{\bfd} \to S_{d_1+ \cdots + d_s}$ is obviously linear on the graded components of interest.

\section{Proof of the Main Theorem}

\begin{lemma}\label{lemma1}
The evaluation $u : R_\bfd \to S_d$ is injective.  
\end{lemma}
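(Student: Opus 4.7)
The plan is to exhibit an explicit basis of $R_\bfd$ on which the map $u$ acts almost like a relabeling of monomials, so that the images are scalar multiples of pairwise distinct monomials of $S_d$ and hence linearly independent.

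First I would fix the standard monomial basis of $R_\bfd$, namely the divided-power multi-monomials $\prod_{j=1}^{s}\prod_{i=0}^{n_j} x_{i,j}^{[a_{i,j}]}$ indexed by tuples $(a_{i,j})$ of non-negative integers satisfying $\sum_{i=0}^{n_j} a_{i,j}=d_j$ for every $j$. Applying $u$ and using the divided-power multiplication rule $z_0^{[\alpha]}\cdot z_0^{[\beta]}=\binom{\alpha+\beta}{\alpha}z_0^{[\alpha+\beta]}$, the image of such a monomial is
\[
u\!\left(\prod_{i,j} x_{i,j}^{[a_{i,j}]}\right)=\binom{A}{a_{0,1},\dots,a_{0,s}}\, z_0^{[A]}\prod_{j=1}^{s}\prod_{i=1}^{n_j} z_{i,j}^{[a_{i,j}]}, \qquad \text{where } A:=\sum_{j=1}^{s} a_{0,j}.
\]
Since $A\le d=d_1+\cdots+d_s$ and the hypothesis $d<\charact(\bbK)$ (vacuous in characteristic zero) guarantees that $A!$ and each $a_{0,j}!$ are invertible in $\bbK$, the multinomial coefficient above is a nonzero scalar.

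Next I would check that the map on basis monomials is injective modulo these nonzero scalars. Suppose two tuples $(a_{i,j})$ and $(b_{i,j})$ in $R_\bfd$ produce the same $z$-monomial. Comparing the exponents of $z_{i,j}$ for $i\ge 1$ forces $a_{i,j}=b_{i,j}$ for all $j$ and all $i\ge 1$. The multi-homogeneity constraint $a_{0,j}=d_j-\sum_{i\ge 1}a_{i,j}$ then forces $a_{0,j}=b_{0,j}$ for every $j$. Hence distinct basis monomials of $R_\bfd$ are sent to nonzero scalar multiples of distinct (and hence linearly independent) monomials of $S_d$, and $u$ is injective.

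There is no genuine obstacle here: the argument is a direct bookkeeping with divided-power monomials. The only subtlety worth flagging is the role of the characteristic hypothesis, which keeps all the multinomial coefficients produced by collapsing the $s$ variables $x_{0,1},\dots,x_{0,s}$ into the single variable $z_0$ invertible, so that no image monomial is killed.
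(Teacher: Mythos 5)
Your proof is correct and follows essentially the same route as the paper's: both arguments rest on the observation that the exponents of the variables $z_{i,j}$ with $i\ge 1$, together with the multi-homogeneity constraint $\sum_{i\ge 0} a_{i,j}=d_j$, recover the exponents of the $x_{0,j}$, so that no information is lost when the $s$ variables $x_{0,1},\dots,x_{0,s}$ are collapsed to $z_0$. You simply carry this out at the level of individual basis monomials rather than grouping terms by the $x_{0,j}$-exponent vector, and in doing so you make explicit the nonvanishing of the divided-power multinomial coefficients under the hypothesis $d<\charact(\bbK)$, a point the paper's proof leaves implicit.
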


\begin{proof}
 Take $f\in R_{\bfd}$ and write 
 \[
f =\sum _{0\le a_1\le d_1,\dots ,0\le a_s \le d_s} x_{0,1}^{a_1}\cdots x_{0,s}^{a_s} F_{a_1,\dots ,a_s},
\]
with $F_{a_1,\dots ,a_s}\in R_{d_1-a_1,\dots ,d_s-a_s}$ and no $x_{0,j}$ appearing in $F_{a_1,\dots ,a_s}$, so that, for every $j$, $F_{a_1 , \dots, a_s}$ is homogeneous of degree $d_j - a_j$ in $\{ x_{i,j} \}_{i = 1, \dots , n_j}$. Therefore
 \[
u(f) = \sum _{0\le a_1\le d_1,\dots ,0\le a_s \le d_s} z_0^{a_1 + \cdots +a_s} F_{a_1,\dots ,a_s} ( z_{ij} ),
 \]
 where $F_{a_1,\dots ,a_s} ( z_{i,j} )$ is just the image of the evaluation $ x_{i,j} \mapsto z_{i,j}$. Now, if $u(f) = 0$, then for every $\rho \in \{ 0 , \dots , d\}$, we have $\sum_{a_1 + \cdots + a_s = \rho} F_{a_1,\dots ,a_s}(z_{i,j}) = 0$; this is a sum of linearly independent terms, because the summands have different multidegree in the variables $\{z_{i,j}\}$. This shows $F_{a_1,\dots ,a_s}(z_{i,j}) = 0$ for every choice of $(a_1 , \dots , a_s)$ and therefore $f = 0$.
\end{proof}

\begin{lemma}\label{lemma2}
The image of $u: R_\bfd \to S_d$ is $\mathcal{A}$. 
\end{lemma}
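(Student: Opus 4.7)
The plan is to describe $\mathcal{A}$ explicitly as the span of a distinguished set of monomials picked out by block-degree bounds, to verify the inclusion $\op{im}(u)\subseteq \mathcal{A}$ directly on the normal form from Lemma \ref{lemma1}, and to conclude equality via a dimension count combined with the injectivity already established in Lemma \ref{lemma1}.

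First, I would unpack $H^0(\PP^n,\Ii_{L_j}(d))$. The linear subspace $M_j\subset\PP^n$ is cut out scheme-theoretically by the linear forms $\{z_0\}\cup\{z_{i,h}:h\neq j,\,1\le i\le n_h\}$, so $I(M_j)^{d-d_j}$ is generated by the monomials of degree $d-d_j$ in these variables. Hence a degree-$d$ monomial lies in $H^0(\PP^n,\Ii_{L_j}(d))$ iff its degree in $\{z_0\}\cup\{z_{i,h}:h\neq j\}$ is at least $d-d_j$, equivalently iff its degree in the $j$-th block $\{z_{i,j}\}_{1\le i\le n_j}$ is at most $d_j$. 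Intersecting these conditions over $j$ gives
\[
\mathcal{A}=\op{span}_\bbK\bigl\{\text{monomials }m\in S_d:\deg_{\{z_{i,j}\}_{i\ge 1}}m\le d_j \text{ for every }j\bigr\}.
\]

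Second, I would verify $\op{im}(u)\subseteq\mathcal{A}$ by reusing the normal form from the proof of Lemma \ref{lemma1}. Writing $f=\sum_{\mathbf{a}}x_{0,1}^{a_1}\cdots x_{0,s}^{a_s}F_{\mathbf{a}}$ with each $F_{\mathbf{a}}$ of multidegree $(d_1-a_1\vvirg d_s-a_s)$ and free of any $x_{0,j}$, the corresponding summand of $u(f)$ is $z_0^{a_1+\cdots+a_s}F_{\mathbf{a}}(z_{i,j})$, whose degree in the $j$-th block of $z$-variables is $d_j-a_j\le d_j$. Every summand therefore satisfies the block-degree bounds defining $\mathcal{A}$, and so does $u(f)$.

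Finally, I would close the argument with a dimension count. A monomial $z_0^a\prod_{j,\,i\ge 1}z_{i,j}^{c_{i,j}}$ of $\mathcal{A}$ is determined by the block degrees $k_j:=\sum_{i\ge 1}c_{i,j}\in\{0\vvirg d_j\}$ together with the distribution of the $c_{i,j}$ within each block (the $z_0$-exponent is then forced to be $\sum_j(d_j-k_j)$). The hockey-stick identity $\sum_{k=0}^{d_j}\binom{k+n_j-1}{n_j-1}=\binom{n_j+d_j}{n_j}$ applied blockwise yields $\dim\mathcal{A}=\prod_j\binom{n_j+d_j}{n_j}=\dim R_\bfd$. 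Combined with the injectivity of $u$ from Lemma \ref{lemma1}, this forces $\op{im}(u)=\mathcal{A}$. No serious obstacle is anticipated: the only care needed is the translation between $I(M_j)^{d-d_j}$ and block-degree conditions on monomials, and the blockwise application of the hockey-stick identity, both of which are routine.
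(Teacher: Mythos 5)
Your proposal is correct, and its skeleton is the same as the paper's: prove $u(R_\bfd)\subseteq\mathcal{A}$ by tracking the block-degrees of monomials, then upgrade the inclusion to an equality using injectivity of $u$ (Lemma \ref{lemma1}) together with $\dim\mathcal{A}=\dim R_\bfd$. The one genuine difference is how the dimension equality is obtained. The paper cites the case $Z=W=\emptyset$ of \cite[Theorem 1.1]{cgg}, whereas you prove it from scratch: you first identify $\mathcal{A}$ explicitly as the span of the degree-$d$ monomials whose degree in the $j$-th block $\{z_{i,j}\}_{i\ge 1}$ is at most $d_j$ for every $j$ (this identification is correct, since $I(M_j)^{d-d_j}$ is a monomial ideal and the ideal of a union is the intersection of the ideals), and then count these monomials blockwise via $\sum_{k=0}^{d_j}\binom{k+n_j-1}{n_j-1}=\binom{n_j+d_j}{n_j}$ to get $\dim\mathcal{A}=\prod_j\binom{n_j+d_j}{n_j}=\dim R_\bfd$. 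This makes the lemma self-contained and, as a bonus, your explicit monomial description of $\mathcal{A}$ makes the forward inclusion transparent (each summand $z_0^{a_1+\cdots+a_s}F_{\mathbf{a}}(z_{i,j})$ visibly satisfies the block-degree bounds). What the citation buys the paper is brevity and a pointer to the broader context of \cite{cgg}, on which the whole strategy of the article is modeled; what your count buys is independence from that reference. Both arguments are complete and correct.
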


\begin{proof} Recall $\mathcal{A} = H^0(\bbP^n, \mathcal{I}_{L_1 \cup \cdots \cup L_s}(d))$. We first prove that $u(R_\bfd) \subseteq \mathcal{A}$. Fix $f\in R_{\bfd}$ and $j\in \{1,\dots ,s\}$. It is sufficient to prove that $u(f)\in H^0(\PP^n,\Ii _{L_j}(d))$ for every $j$. Since $f$ has degree $d_j$ with respect to the variables $\{x_{0,j},x_{1,j},\dots ,x_{n_j,j}\}$, every monomial appearing with non-zero
coefficient in $f$ has degree at most $d_j$ with respect to $\{x_{1,j},\dots ,x_{n_j,j}\}$; therefore each monomial appearing with non-zero coefficient in $u(f)$ has degree
at most $d_j$ with respect to $\{z_{1,1},\dots ,z_{n_j,j}\}$ and hence degree at least $d-d_j$ with respect to the other homogeneous coordinates of $\PP^n$. This is equivalent to saying that $u(f)\in H^0(\PP^n,\Ii _{L_j}(d))$ by the definitions of $M_j$ and of $L_j$. The inclusion $u(H^0(\Oo _Y(\bfd))) \supseteq \mathcal{A}$, now follows from the fact that $u(H^0(\Oo _Y(\bfd)))$ and $ \mathcal{A}$ have the same dimension, which is a consequence of the injectivity of $u$ (Lemma \ref{lemma1}), and of the fact that $R_{\bfd}$ and $\mathcal{A}$ have the same dimension (case $Z=W=\emptyset$ of \cite[Theorem 1.1]{cgg}).
\end{proof}

\begin{remark}
The map $u$ induces an isomorphism between the open set $\AA^n \subseteq \bbP^n$ with $z_0 \neq 0$ and the open set $U \subseteq Y$ with $x_{0,j} \neq 0$. Explicitly, this isomorphism is 
\begin{align*}
 U &\to \AA^n \\
((1,\xi_{1,1} \vvirg \xi_{n_1,1}) \vvirg (1,\xi_{1,s}\vvirg \xi_{n_s,s})) &\mapsto ( \xi_{i,j} : i = 1 \vvirg n_j, j = 1 \vvirg s),
\end{align*}
where $(\xi_{1,1} \vvirg \xi_{n_j,1})$ are coordinates on the open set $\AA^{n_j} \subseteq \bbP^{n_j}$ of the $j$-th factor of $Y$. In particular, any zero-dimensional scheme $W\subset \AA^n=U\subset \PP^n$ corresponds isomorphically to a zero-dimensional scheme $Z\subset Y$ of the same degree.
\end{remark}

\begin{proof}[Proof of Theorem \ref{i1}.] 
Let $\nu _d: \PP^n \to \PP^N$, $N= \binom{n+d}{n}-1$, be the order $d$ Veronese embedding. The point $f$ is (the class of) a multi-homogeneous polynomial; let $ p = u(f) \in \mathbb{P}^N$ be its image via $u$, so that, from Lemma \ref{lemma2}, $p \in \mathbb{P}\calA$. Following \cite{br}, let $N_d = \binom{n+k}{k}+\binom{n+k+1}{k+1}$ if $d=2k+2$ is even) and $N_d =  2\binom{n+k}{k}$ if $d=2k+1$ is odd. Theorem 3 in \cite{br} shows that $cr(p) \leq N_d$ and in fact that there is a punctual (i.e. connected) zero-dimensional scheme $W \subset \AA^n \subseteq \PP^n$ such that $\deg(W) \leq N_d$ and $p\in \langle \nu _d(W)\rangle$. If $\bbK$ has positive characteristic, (in this case we assume $\charact(\bbK) > d$ as in Definition \ref{cactus:def}) we follow \cite[Section 2.1 and Section 4]{bjmr}. Let $Z = u^{-1} (W) \subset \AA^n\subset Y$ be the corresponding zero-dimensional scheme in $Y$ obtained via the isomorphism induced by $u$. We have $p = u(f) \in \langle \nu_d( W) \rangle$ and therefore $f \in \langle \nu_{\bfd} (Z) \rangle$.
\end{proof}

\bibliographystyle{plain}
\bibliography{bibQcactus}

\end{document}